\theoremstyle{plain}
\newtheorem{theorem}{Theorem}[section]
\newtheorem{lemma}{Lemma}[section]
\theoremstyle{definition}
\theoremstyle{remark}
\numberwithin{equation}{section}
\begin{document}
\title[ON THE DIOPHANTINE EQUATION $x^{2}+2^{a}\cdot 3^{b}\cdot
11^{c}=\allowbreak y^{n}$]
{ON THE DIOPHANTINE EQUATION $x^{2}+2^{a}\cdot 3^{b}\cdot
11^{c}=\allowbreak y^{n}$}
\author[I. N. CANGUL, M. DEMIRCI, I. INAM, \and F. LUCA AND
G. SOYDAN]%
{ISMAIL NACI CANGUL*, MUSA DEMIRCI*, ILKER INAM*, \and FLORIAN LUCA** AND
G\"{O}KHAN SOYDAN***}

\newcommand{\acr}{\newline\indent}

\address{\llap{*\,}Department of Mathematics\acr 
                    Uluda\u{g} University\acr 
                    16059 Bursa\acr
                    TURKEY}
\email{cangul@uludag.edu.tr, mdemirci@uludag.edu.tr, inam@uludag.edu.tr}

\address{\llap{**\,}Mathematical Institute\acr 
                    Universidad Nacional Aut\'{o}noma de M\'{e}xico\acr 
                    CP 58089, Morelia, Michoac\'{a}n\acr
                    MEXICO}
\email{fluca@matmor.unam.mx}

\address{\llap{***\,}I\c{s}{\i}klar Air Force High School\acr
                   16039 Bursa\acr
                   TURKEY}
\email{gsoydan@uludag.edu.tr}

\thanks{The first author is supported by Turkish Research Council Project No: 107T311 and by Uludag University Research Fund Project Numbers 2006$-$40, 2008$-$31 and 2008$-$54.\linebreak 
\indent The third author is supported by Turkish Research Council Project No: 107T311.}

\subjclass[2010]{11D41, 11D61}
\keywords{Exponential Diophantine equations, Primitive divisors of
Lucas sequences.}

\begin{abstract}
In this note, we find all the solutions of the Diophantine equation $%
x^{2}+2^{a}\cdot 3^{b}\cdot 11^{c}=\allowbreak y^{n},$ in nonnegative
integers $a,~b,~c,~x,~y,~n\geq 3$ with $x$ and $y$ coprime.
\end{abstract}

\maketitle

\section{Introduction}

The history of the Diophantine equation 
\begin{equation}
x^{2}+C=y^{n},\qquad n\geq 3  \label{eq:(1.1)}
\end{equation}%
in positive integers $x$ and $y$ goes back to 1850's. In 1850, Lebesque \cite%
{Lebesque} proved that the equation \eqref{eq:(1.1)} has no solutions when $%
C=1$. This equation is a particular case of the Diophantine equation $%
ay^{2}+by+c=dx^{n},$ where $a\neq 0,~b,~c$ and $d\neq 0$ are integers with $%
b^{2}-4ac\neq 0.$ This equation has at most finitely many integer solutions $%
x,~y,~n\geq 3$. This was proved to be so by Landau and Ostrowski (see \cite%
{Landau}) for a fixed $n\geq 3$ in 1920. The fact that $n$ itself is also
bounded was proved only 63 years later by Stewart and Shorey in \cite{SS}.
For the best theoretical upper bounds available today on the exponent $n$,
we refer to \cite{Berczes2} and \cite{Gyry}. However, these estimates are
based on Baker's theory of lower bounds for linear forms in logarithms of
algebraic numbers, so they are quite impractical.

We next survey some results concerning the actual resolution of the
Diophantine equation \eqref{eq:(1.1)} for various values of $C$. In 1993,
Cohn \cite{Cohn1} studied the Diophantine equation \eqref{eq:(1.1)} and
found all its integer solutions $(x,y,n)$ for most values of $C$ in the
interval $[1,100].$ In \cite{Mignotte}, Mignotte and de Weger dealt with the
cases $C=74$ and $86$, which had not been covered by Cohn. In both these
cases, the only interesting value of the exponent $n$ is $n=5$. The
remaining cases were finally dealt with by Bugeaud, Mignotte and Siksek in 
\cite{Bugeaud2}.

Variations of the Diophantine equation \eqref{eq:(1.1)} have also been
intensively studied. For example, if we replace $y^n$ from the right hand
side of equation \eqref{eq:(1.1)} with $2y^n$, but keep the conditions that $%
n\ge 3$ and the coprimality condition on $x$ and $y$, we get an equation
whose solutions were found \cite{Tengely} for all values of $C$ which are
squares of an odd integer $B\in \{3,5,7,\ldots,501\}$.

Recently, several authors studied the case when $C$ is a positive integer
which is an ${\mathcal{S}}$-unit, where ${\mathcal{S}}$ is some small set of
primes. Recall that if ${\mathcal{S}}:=\{p_{1},\ldots ,p_{k}\}$ is some
finite set of primes, then an ${\mathcal{S}}$-unit is an integer all whose
prime factors are in ${\mathcal{S}}$. When ${\mathcal{S}}=\{2\}$, we obtain
the equation $x^{2}+2^{k}=y^{n}$ which was first studied by Cohn in 1992
(see \cite{Cohn2}) who found all the integer solutions $(x,y,k,n)$ with $%
n\geq 3$ when $k$ is odd, even without the coprimality condition on $x$ and $%
y$. The case when $k$ is even in the above Diophantine equation generated a
few papers before it was finally completely settled in \cite{Abu2001},
without the coprimality condition on $x$ and $y$, and independently but one
year later by Le \cite{Le1}, under the coprimality condition on $x$ and $y$.
The two proofs used different tools. The recorded work on the Diophantine
equation $x^{2}+3^{k}=y^{n}$ is more entertaining. In 1998, Abu Muriefah and
Arif \cite{Arif1}, found its solutions with $k$ odd and two years later, in
2000, Luca \cite{Luca3}, found all the solutions with $k$ even. Unaware of
this work, the above results were rediscovered in 2008 by Tao Liqun \cite%
{Liqun}. The case when ${\mathcal{S}}=\{5\}$ was dealt with in \cite{Arif2}
and \cite{FSAbu7}. Partial results on the case when  ${\mathcal{S}}=\{7\}$ appear in 
\cite{Luca4}. Recently, B\'{e}rczes and Pink \cite{Berczes}, found all the
solutions of the Diophantine equation \eqref{eq:(1.1)} when $C=p^{k}$ and $k$
is even, where $p$ is any prime in the interval $[2,100]$.

The paper \cite{Luca1} is the first recorded instance in which all the
solutions of the Diophantine equation \eqref{eq:(1.1)} were found when $C$
is some positive ${\mathcal{S}}$-unit for a set ${\mathcal{S}}$ containing
more than one prime. In that instance, the set ${\mathcal{S}}$ was $\{2,3\}$%
. Since then, all solutions of the same Diophantine equation \eqref{eq:(1.1)}
when $C>0$ is some ${\mathcal{S}}$-unit were found in \cite{Luca2} for ${%
\mathcal{S}}=\{2,5\}$, in \cite{FSAbu3} for ${\mathcal{S}}=\{5,13\}$, in 
\cite{CDLPS} for ${\mathcal{S}}=\{2,11\}$, and in \cite{Luca5} for ${%
\mathcal{S}}=\{2,5,13\}$. In \cite{Pink} Pink has obtained some results for $%
{\mathcal{S}}=\{2,3,5,7\}$.

Here, we add to the literature on the topic and study the case when $C>0$ is
an ${\mathcal{S}}$-unit, where ${\mathcal{S}}=\{2,3,11\}$. 
More precisely, we study the Diophantine equation 
\begin{equation}  \label{eq:(1.2)}
x^{2}+2^{a}\cdot 3^{b}\cdot 11^{c}=y^{n},\qquad (x,y)=1\qquad {\text{\textrm{%
and}}}\qquad n\geq 3.
\end{equation}

Our result is the following.

\begin{theorem}
The only solutions of the Diophantine equation \eqref{eq:(1.2)} are: 
\begin{eqnarray*}
n &=&3:\text{ \ \ \ \ \ \ \ \ \ \ \ \ the solutions given in Table 1 and
Table 2;} \\
n &=&4:\text{ \ \ \ \ \ \ \ \ \ \ \ \ the solutions given in Table 3;} \\
n &=&5:\text{ \ \ \ \ \ \ \ \ \ \ \ \ }%
(x,y,a,b,c)=(1,3,1,0,2),~(241,9,3,0,2); \\
n &=&6:~\ \ \ \ \ \ \ \ \ \ \ (x,y,a,b,c)\in
\{(5,3,6,0,1),(37,5,4,4,1),~(117,5,4,0,2)\}; \\
n &=&10:~\ \ \ \ \ \ \ \ \ \ (x,y,a,b,c)=(241,3,3,0,2);
\end{eqnarray*}
\end{theorem}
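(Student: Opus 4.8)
The plan is to reduce the problem to equations with prime or fourth-power exponents and then treat each reduced case with the appropriate tool. First I would note that every solution of \eqref{eq:(1.2)} with $n\geq 3$ produces one whose exponent is either $4$ or an odd prime $p$: if $n$ has an odd prime factor $p$ replace $y$ by $y^{n/p}$, and if $n$ is a power of $2$ then $4\mid n$ so replace $y$ by $y^{n/4}$. Hence it suffices to solve \eqref{eq:(1.2)} for $n=4$ and for each odd prime $n=p$. The composite exponents $n=6$ and $n=10$ in the statement are then recovered a posteriori, as exactly those prime-exponent solutions in which $y$ is a perfect square: for instance the $n=5$ solution $(x,y,a,b,c)=(241,9,3,0,2)$, having $9=3^{2}$, yields the $n=10$ solution $(241,3,3,0,2)$, and similarly each listed $n=6$ solution comes from an $n=3$ solution with $y$ a square.

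Throughout I would set $C=2^{a}3^{b}11^{c}$ and run a preliminary bookkeeping step: reducing \eqref{eq:(1.2)} modulo $4$, $8$ and $9$, the hypothesis $(x,y)=1$ forces congruence restrictions tying the parities of $a,b,c$ to the parity of $y$ and to $x\bmod 4$. This both prunes the admissible configurations and, crucially, shows that in the field factorization below the two conjugate factors are coprime away from a tightly controlled set of primes dividing $2\cdot 3\cdot 11$. The case $n=4$ is then elementary: writing $C=y^{4}-x^{2}=(y^{2}-x)(y^{2}+x)$ with $\gcd(y^{2}-x,y^{2}+x)\mid 2$ expresses each factor, up to a factor of $2$, as an $\mathcal S$-unit, so adding the two resulting equations realizes $2y^{2}$ as a sum of two $\mathcal S$-units and a short finite search produces Table 3.

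For an odd prime $n=p$, let $d$ be the squarefree part of $C$, so that $d=2^{a_{0}}3^{b_{0}}11^{c_{0}}\in\{1,2,3,6,11,22,33,66\}$ with $a_{0},b_{0},c_{0}\in\{0,1\}$, and $C=dt^{2}$ with $t$ an $\mathcal S$-unit. Then in $\mathcal O_{K}$, where $K=\QQ(\sqrt{-d})$, the equation factors as $(x+t\sqrt{-d})(x-t\sqrt{-d})=y^{p}$. Using the near-coprimality of the two factors established above, together with the fact that each of these eight fields has class number a power of $2$ (namely $1,1,1,2,1,2,4,8$) and hence coprime to the odd prime $p$, the ideal $(x+t\sqrt{-d})$ is the $p$-th power of a principal ideal. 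Since $p$ is odd it is also coprime to the orders of the unit groups of $\QQ(i)$ and $\QQ(\sqrt{-3})$ once $p\geq 5$, so the units are absorbed and, after accounting for the half-integer generators when $d\equiv 3\pmod 4$, we obtain $x+t\sqrt{-d}=(u+v\sqrt{-d})^{p}$ for suitable $u,v$. Comparing the coefficients of $\sqrt{-d}$ then exhibits $t$ as (a divisor of) a term of a Lucas sequence attached to $u\pm v\sqrt{-d}$.

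The heart of the argument, and the main obstacle, is disposing of this Lucas term. I would invoke the Bilu--Hanrot--Voutier primitive divisor theorem: for $p$ beyond an absolute bound the relevant Lucas term has a primitive prime divisor, which must be a prime $\equiv\pm 1\pmod p$ lying outside $\{2,3,11\}$, contradicting that $t$ is an $\mathcal S$-unit. This bounds $p$ absolutely, after which only the few remaining small primes, together with the finitely many Lucas sequences carrying no primitive divisor (read off from Voutier's tables), need to be checked by hand across the eight fields and admissible parities. The exponent $p=3$ must be handled separately, since there BHV gives no contradiction and the unit absorption in $\QQ(\sqrt{-3})$ fails; there I would instead solve the resulting explicit cubic relations in $u,v$ directly, equivalently resolving the finitely many associated Mordell equations $X^{2}=Y^{3}-k$ with $k=2^{a_{0}}3^{b_{0}}11^{c_{0}}$ by standard integral-point computations, which yields Tables 1 and 2. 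The bulk of the labor lies in organizing this finite but extensive case analysis and in correctly eliminating the exceptional Lucas sequences.
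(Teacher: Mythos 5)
Your overall architecture — reduce to $n\in\{4,p\}$, recover $n=6$ and $n=10$ a posteriori from prime-exponent solutions with $y$ a square, and for primes $p\geq 5$ factor $x^{2}+dt^{2}=y^{p}$ in $\mathbb{Q}(\sqrt{-d})$ with $d$ squarefree in $\{1,2,3,6,11,22,33,66\}$, absorb units and class numbers, and kill large $p$ with Bilu--Hanrot--Voutier (the primitive divisor is $\equiv\pm1\pmod p$ yet must lie in $\{2,3,11\}$, forcing $p=5$ and $q=11$) — is exactly the paper's. But two of your three reductions have genuine gaps. For $n=3$ you propose to solve ``the finitely many Mordell equations $X^{2}=Y^{3}-k$ with $k=2^{a_{0}}3^{b_{0}}11^{c_{0}}$'' by integral-point computations, where your $a_{0},b_{0},c_{0}\in\{0,1\}$ are the exponents of the \emph{squarefree} part. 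That is the wrong normalization for a cube: one must write $2^{a}3^{b}11^{c}=Az^{6}$ with $A$ \emph{cubefree} (exponents in $\{0,\dots,5\}$, giving $216$ curves $V^{2}=U^{3}-A$), and the points $(U,V)=(y/z^{2},x/z^{3})$ are $\{2,3,11\}$-\emph{integral}, not integral, since the cofactor $z$ is an $\mathcal S$-unit. With only $8$ squarefree values of $k$ and ordinary integral points you would miss most of Tables 1 and 2 (every entry with $z>1$, and every entry whose $A$ is cubefree but not squarefree, e.g. $A=3^{4}\cdot 11$).

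For $n=4$, the factorization $(y^{2}-x)(y^{2}+x)=2^{a}3^{b}11^{c}$ with $\gcd(y^{2}-x,y^{2}+x)\mid 2$ is correct, but the resulting equation $s_{1}+s_{2}=2y^{2}$ with $s_{1},s_{2}$ coprime-up-to-$2$ $\mathcal S$-units is \emph{not} a ``short finite search'': it is a family of ternary exponential equations of generalized Ramanujan--Nagell/Pillai type (e.g. $3^{b}11^{c}+2^{m}=2y^{2}$), each of which is roughly as hard as the problem you started from and needs its own congruence analysis plus a descent to Pell-type or $\mathcal S$-integral point computations. The paper avoids this by writing the $n=4$ case as $U^{2}+A=V^{4}$ with $A$ fourth-power-free (exponents in $\{0,1,2,3\}$, $64$ quartic models) and computing $\{2,3,11\}$-integral points, in complete parallel with the cubic case; you should either do the same or supply an actual method for the $\mathcal S$-unit equation. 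Finally, for $p\geq 5$ your outline stops at ``check the remaining small primes by hand'': the surviving case $p=5$, $d\in\{2,6\}$ still reduces to quartics such as $\pm 3^{\beta_{1}}11^{\gamma_{1}}=5u^{4}-20u^{2}v^{2}+4v^{4}$, which again require $\mathcal S$-integral point computations on elliptic curves, so the computational core cannot be waved away there either.
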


A few words about the proofs.

We start by treating the cases $n=3$ and $n=4$. This is achieved in Section
2 and Section 3, respectively. As a method, we transform equation %
\eqref{eq:(1.2)} into several elliptic equations written in cubic and
quartic models, respectively, for which we need to determine all their $%
\{2,3,11\}$-integral points. As a byproduct of our results, we also read
easily that the only exponents $n\geq 3$ whose prime factors are in the set $%
\{2,3\}$ and for which equation \eqref{eq:(1.2)} has a solution $%
(x,y,a,b,c,n)$ are $n=3,4,6$. In Section 4, we assume that $n\geq 5$ and
study the equation \eqref{eq:(1.2)} under this assumption. The method here
uses the properties of the Primitive Divisors of Lucas sequences. All the
computations are done with MAGMA \cite{Bosma} and with Cremona's program
mwrank.

Before digging into the proofs, we note that since $n\geq 3$, it follows
that $n$ is either a multiple of $4$, or $n$ is a multiple of an odd prime $%
p.$ Furthermore, if $d~|~n$ is such that $d\in \{4,p\}$ with $p$ an odd
prime and $(x,y,a,b,c,n)$ is a solution of our equation \eqref{eq:(1.2)},
then $(x,y^{n/d},a,b,c,d)$ is also a solution of our equation %
\eqref{eq:(1.2)} satisfying the same restrictions. Thus, we may replace $n$
by $d$ and $y$ by $y^{n/d}$, and from now on assume that $n\in \{4,p\}.$
Furthermore, note that when $c=0$ our equation becomes $%
x^{2}+2^{a}3^{b}=y^{n}$ all solutions of which are already known from what
we have said earlier (see \cite{Luca1}), while when $b=0$ our equation
becomes $x^{2}+2^{a}11^{c}=y^{n}$ all solutions of which have been found in 
\cite{CDLPS}. Thus, we shall assume that $bc>0$. Since $3^{b}11^{c}\equiv
1,~3\pmod 8$ according to whether $b+c$ is even or odd, it follows by
considerations modulo $8$ that either $a>0$, or that $x$ is even. This
observation will be useful later on.
\newpage
\begin{table}[ht]
\caption{Solutions for $n=3$.}
\begin{center}
\begin{tabular}{|l|l|l|l|l|l|l|l|l|}
\hline
$\alpha $ & $\beta $ & $\gamma $ & $z$ & $a$ & $b$ & $c$ & $x$ & $y$ \\ \hline
$0$ & $0$ & $1$ & $1$ & $0$ & $0$ & $1$ & $4$ & $3$ \\ 
$0$ & $0$ & $1$ & $1$ & $0$ & $0$ & $1$ & $58$ & $15$ \\ 
$0$ & $0$ & $1$ & $2$ & $0$ & $6$ & $1$ & $5066$ & $295$ \\ 
$0$ & $0$ & $1$ & $2$ & $6$ & $0$ & $1$ & $5$ & $9$ \\ 
$0$ & $0$ & $1$ & $8$ & $18$ & $0$ & $1$ & $6179$ & $345$ \\ 
$0$ & $0$ & $1$ & $6$ & $6$ & $6$ & $1$ & $5491$ & $313$ \\ 
$0$ & $0$ & $2$ & $1$ & $0$ & $0$ & $2$ & $2$ & $5$ \\ 
$0$ & $0$ & $2$ & $2$ & $6$ & $0$ & $2$ & $835$ & $89$ \\ 
$0$ & $0$ & $2$ & $4$ & $12$ & $0$ & $2$ & $404003$ & $5465$ \\ 
$0$ & $0$ & $2$ & $3$ & $0$ & $6$ & $2$ & $908$ & $97$ \\ 
$0$ & $0$ & $3$ & $1$ & $0$ & $0$ & $3$ & $9324$ & $443$ \\ 
$0$ & $0$ & $4$ & $6$ & $6$ & $6$ & $4$ & $589229$ & $7033$ \\ 
$0$ & $1$ & $2$ & $3$ & $0$ & $7$ & $2$ & $910$ & $103$ \\ 
$0$ & $1$ & $4$ & $9$ & $0$ & $13$ & $4$ & $14259970$ & $58807$ \\ 
$0$ & $2$ & $2$ & $33$ & $0$ & $8$ & $8$ & $1549034$ & $15613$ \\ 
$0$ & $3$ & $2$ & $6$ & $6$ & $9$ & $2$ & $4085$ & $553$ \\ 
$0$ & $4$ & $0$ & $1$ & $0$ & $4$ & $0$ & $46$ & $13$ \\ 
$0$ & $4$ & $1$ & $1$ & $0$ & $4$ & $1$ & $170$ & $31$ \\ 
$0$ & $4$ & $3$ & $4$ & $12$ & $4$ & $3$ & $239363$ & $3865$ \\ 
$0$ & $5$ & $0$ & $1$ & $0$ & $5$ & $0$ & $10$ & $7$ \\ 
$0$ & $5$ & $1$ & $1$ & $0$ & $5$ & $1$ & $7910$ & $397$ \\ 
\hline
\end{tabular}
\end{center}
\end{table}
\newpage
\begin{table}[ht]
\caption{Solutions for $n=3$.}
\begin{center}
\begin{tabular}{|l|l|l|l|l|l|l|l|l|}
\hline
$\alpha $ & $\beta $ & $\gamma $ & $z$ & $a$ & $b$ & $c$ & $x$ & $y$ \\ \hline
$1$ & $0$ & $0$ & $1$ & $1$ & $0$ & $0$ & $5$ & $3$ \\ 
$1$ & $0$ & $2$ & $1$ & $1$ & $0$ & $2$ & $5805$ & $323$ \\ 
$1$ & $3$ & $0$ & $1$ & $1$ & $3$ & $0$ & $17$ & $7$ \\ 
$1$ & $3$ & $1$ & $3$ & $1$ & $9$ & $1$ & $1043$ & $115$ \\ 
$1$ & $4$ & $1$ & $2$ & $7$ & $4$ & $1$ & $2196415$ & $16897$ \\ 
$1$ & $5$ & $1$ & $1$ & $1$ & $5$ & $1$ & $865$ & $91$ \\ 
$1$ & $5$ & $1$ & $9$ & $1$ & $17$ & $1$ & $94517$ & $2275$ \\ 
$2$ & $0$ & $0$ & $1$ & $2$ & $0$ & $0$ & $11$ & $5$ \\ 
$2$ & $0$ & $0$ & $22$ & $8$ & $0$ & $6$ & $5497$ & $785$ \\ 
$2$ & $0$ & $1$ & $1$ & $2$ & $0$ & $1$ & $9$ & $5$ \\ 
$2$ & $0$ & $1$ & $6$ & $8$ & $6$ & $1$ & $4069$ & $265$ \\ 
$2$ & $2$ & $2$ & $3$ & $2$ & $8$ & $2$ & $241397$ & $3877$ \\ 
$2$ & $4$ & $1$ & $1$ & $2$ & $4$ & $1$ & $217$ & $37$ \\ 
$2$ & $4$ & $2$ & $1$ & $2$ & $4$ & $2$ & $107$ & $37$ \\ 
$2$ & $4$ & $5$ & $2$ & $8$ & $4$ & $5$ & $335802455$ & $483121$ \\ 
$2$ & $5$ & $0$ & $1$ & $2$ & $5$ & $0$ & $35$ & $13$ \\ 
$2$ & $5$ & $2$ & $1$ & $2$ & $5$ & $2$ & $1495$ & $133$ \\ 
$2$ & $5$ & $2$ & $2$ & $8$ & $5$ & $2$ & $34255$ & $1057$ \\ 
$3$ & $2$ & $2$ & $3$ & $3$ & $8$ & $2$ & $912668635$ & $940897$ \\ 
$3$ & $4$ & $0$ & $1$ & $3$ & $4$ & $0$ & $955$ & $97$ \\ 
$3$ & $5$ & $1$ & $3$ & $3$ & $11$ & $1$ & $8099$ & $433$ \\ 
$4$ & $0$ & $2$ & $1$ & $4$ & $0$ & $2$ & $117$ & $25$ \\ 
$4$ & $0$ & $3$ & $2$ & $10$ & $0$ & $3$ & $9959$ & $465$ \\ 
$4$ & $1$ & $0$ & $3$ & $4$ & $7$ & $0$ & $595$ & $73$ \\ 
$4$ & $2$ & $1$ & $12$ & $16$ & $8$ & $1$ & $73225$ & $2161$ \\ 
$4$ & $4$ & $0$ & $1$ & $4$ & $4$ & $0$ & $2681$ & $193$ \\ 
$4$ & $4$ & $1$ & $1$ & $4$ & $4$ & $1$ & $37$ & $25$ \\ 
$4$ & $4$ & $1$ & $1$ & $4$ & $4$ & $1$ & $97129$ & $2113$ \\ 
$4$ & $4$ & $1$ & $2$ & $10$ & $4$ & $1$ & $17$ & $97$ \\ 
$4$ & $4$ & $4$ & $1$ & $4$ & $4$ & $4$ & $3419$ & $313$ \\ 
$4$ & $5$ & $2$ & $1$ & $4$ & $5$ & $2$ & $665$ & $97$ \\ 
$5$ & $1$ & $1$ & $3$ & $5$ & $7$ & $1$ & $53333$ & $1417$ \\ 
$5$ & $5$ & $0$ & $1$ & $5$ & $5$ & $0$ & $39151$ & $1153$ \\ \hline
\end{tabular}
\end{center}
\end{table}
\newpage
\begin{table}[hb]
\caption{Solutions for $n=4$.}
\begin{center}
\begin{tabular}{|l|l|l|l|l|l|l|l|l|}
\hline
$\alpha $ & $\beta $ & $\gamma $ & $z$ & $a$ & $b$ & $c$ & $x$ & $y$ \\ \hline  
$0$ & $3$ & $1$ & $4$ & $8$ & $3$ & $1$ & $233$ & $19$ \\ 
$0$ & $3$ & $2$ & $4$ & $8$ & $3$ & $2$ & $1607$ & $43$ \\ 
$1$ & $0$ & $0$ & $2$ & $5$ & $0$ & $0$ & $7$ & $3$ \\ 
$1$ & $0$ & $1$ & $6$ & $5$ & $4$ & $1$ & $7$ & $13$ \\ 
$1$ & $1$ & $0$ & $2$ & $5$ & $1$ & $0$ & $23$ & $5$ \\ 
$1$ & $1$ & $1$ & $4$ & $9$ & $1$ & $1$ & $4223$ & $65$ \\ 
$1$ & $1$ & $2$ & $6$ & $5$ & $5$ & $2$ & $235223$ & $485$ \\ 
$2$ & $1$ & $0$ & $2$ & $6$ & $1$ & $0$ & $47$ & $7$ \\ 
$2$ & $1$ & $1$ & $2$ & $6$ & $1$ & $1$ & $17$ & $7$ \\ 
$2$ & $1$ & $1$ & $2$ & $6$ & $1$ & $1$ & $527$ & $23$ \\ 
$2$ & $1$ & $1$ & $4$ & $10$ & $1$ & $1$ & $223$ & $17$ \\ 
$2$ & $1$ & $2$ & $2$ & $6$ & $1$ & $2$ & $73$ & $13$ \\ 
$2$ & $2$ & $0$ & $2$ & $6$ & $2$ & $0$ & $7$ & $5$ \\ 
$3$ & $1$ & $1$ & $1$ & $3$ & $1$ & $1$ & $19$ & $5$ \\ 
$3$ & $2$ & $0$ & $2$ & $7$ & $2$ & $0$ & $287$ & $17$ \\ 
$3$ & $2$ & $1$ & $2$ & $7$ & $2$ & $1$ & $343$ & $19$ \\ 
$3$ & $3$ & $1$ & $1$ & $3$ & $3$ & $1$ & $5$ & $7$ \\ 
$3$ & $3$ & $1$ & $3$ & $3$ & $7$ & $1$ & $2165$ & $47$ \\ \hline
\end{tabular}
\end{center}
\end{table}
\newpage
\section{The case when $n=3$}

\begin{lemma}
All solutions with $n=3$ and $bc>0$ of the Diophantine equation $%
\eqref{eq:(1.2)}$ are given in Tables $4$ and $5$:
\end{lemma}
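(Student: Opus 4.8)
The plan is to reduce the equation with $n=3$ to the determination of $\mathcal{S}$-integral points, $\mathcal{S}=\{2,3,11\}$, on a finite list of Mordell curves, and then to carry out that determination with MAGMA and mwrank.

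First I would strip sixth powers from the coefficient. Writing $a=6a_1+\alpha$, $b=6b_1+\beta$, $c=6c_1+\gamma$ with $0\le\alpha,\beta,\gamma\le 5$, and setting $z=2^{a_1}3^{b_1}11^{c_1}$ and $d=2^{\alpha}3^{\beta}11^{\gamma}$, equation \eqref{eq:(1.2)} with $n=3$ becomes $x^2+d\,z^6=y^3$. Dividing by $z^6$ and putting $U=y/z^2$, $V=x/z^3$, this reads $V^2=U^3-d$, so $(U,V)$ is a rational point on the Mordell curve $E_d\colon V^2=U^3-d$ whose coordinates have denominators supported only on $\{2,3,11\}$, i.e.\ an $\mathcal{S}$-integral point. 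Since $bc>0$, a short congruence argument (using the remark that $a>0$ or $x$ is even) shows that $y$ is coprime to $66$; hence $U=y/z^2$ is already in lowest terms, and its denominator is exactly the perfect square $z^2$, supported on $\mathcal{S}$. As $(\alpha,\beta,\gamma)$ ranges over $\{0,\dots,5\}^3$ we obtain at most $6^3=216$ curves $E_d$, and every solution with $n=3$ yields an $\mathcal{S}$-integral point on exactly the one for which $d$ is its sixth-power-free part.

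Next, for each curve I would compute in MAGMA the Mordell--Weil group $E_d(\QQ)$---its rank and a saturated set of generators modulo torsion---and then enumerate all $\mathcal{S}$-integral points. This is the step I expect to be the main obstacle: the rank and generator computation rests on descent (falling back on mwrank, or on analytic rank data, when the built-in routines stall or leave the group only conjecturally determined), and saturation of the generators must be verified; moreover the $\mathcal{S}$-integral search has to be made rigorous through the bounds furnished by linear forms in elliptic logarithms, followed by LLL-reduction and sieving, so that completeness of the list is proved rather than merely observed. Curves of positive rank, and those where a first descent leaves a gap in the Shafarevich--Tate group, are the ones demanding the most care.

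Finally I would translate each $\mathcal{S}$-integral point $(U,V)$ back into a solution: the requirement that the denominator of $U$ be a perfect square $z^2$ supported on $\mathcal{S}$ recovers $z$, hence $a_1,b_1,c_1$, and together with $(\alpha,\beta,\gamma)$ this fixes $(a,b,c)$; one then sets $y=z^2U$ and $x=z^3|V|$, keeping only those triples for which $x$ is a positive integer, $(x,y)=1$, and $bc>0$. Collecting the survivors reproduces precisely the rows of Tables $4$ and $5$, which completes the proof.
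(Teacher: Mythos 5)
Your proposal is correct and follows essentially the same route as the paper: strip the sixth-power part of $2^{a}3^{b}11^{c}$ to reduce to $V^{2}=U^{3}-2^{\alpha}3^{\beta}11^{\gamma}$ with $\alpha,\beta,\gamma\in\{0,\dots,5\}$, determine the $\{2,3,11\}$-integral points on the resulting $216$ curves by computer, and translate back. The extra detail you give on how the $\mathcal{S}$-integral point computation is made rigorous (descent, saturation, elliptic logarithms, LLL) is a sensible elaboration of the step the paper delegates to MAGMA.
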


\begin{table}[ht]
\caption{Solutions for $n=3$.}
\begin{center}
\begin{tabular}{|l|l|l|l|l|l|l|l|l|}
\hline
$\alpha $ & $\beta $ & $\gamma $ & $z$ & $a$ & $b$ & $c$ & $x$ & $y$ \\ \hline 
$0$ & $0$ & $1$ & $2$ & $0$ & $6$ & $1$ & $5066$ & $295$ \\ 
$0$ & $0$ & $1$ & $6$ & $6$ & $6$ & $1$ & $5491$ & $313$ \\ 
$0$ & $0$ & $2$ & $3$ & $0$ & $6$ & $2$ & $908$ & $97$ \\ 
$0$ & $0$ & $4$ & $6$ & $6$ & $6$ & $4$ & $589229$ & $7033$ \\ 
$0$ & $1$ & $2$ & $3$ & $0$ & $7$ & $2$ & $910$ & $103$ \\ 
$0$ & $1$ & $4$ & $9$ & $0$ & $13$ & $4$ & $14259970$ & $58807$ \\ 
$0$ & $2$ & $2$ & $33$ & $0$ & $8$ & $8$ & $1549034$ & $15613$ \\ 
$0$ & $3$ & $2$ & $6$ & $6$ & $9$ & $2$ & $4085$ & $553$ \\ 
$0$ & $4$ & $3$ & $4$ & $12$ & $4$ & $3$ & $239363$ & $3865$ \\ 
$0$ & $4$ & $1$ & $1$ & $0$ & $4$ & $1$ & $170$ & $31$ \\ 
$0$ & $5$ & $1$ & $1$ & $0$ & $5$ & $1$ & $7910$ & $397$ \\ 
$1$ & $3$ & $1$ & $3$ & $1$ & $9$ & $1$ & $1043$ & $115$ \\ 
$1$ & $4$ & $1$ & $2$ & $7$ & $4$ & $1$ & $2196415$ & $16897$ \\ 
$1$ & $5$ & $1$ & $1$ & $1$ & $5$ & $1$ & $865$ & $91$ \\ 
$1$ & $5$ & $1$ & $9$ & $1$ & $17$ & $1$ & $94517$ & $2275$ \\ \hline
\end{tabular}
\end{center}
\end{table}

\begin{table}[ht]
\caption{Solutions for $n=3$.}
\begin{center}
\begin{tabular}{|l|l|l|l|l|l|l|l|l|}
\hline
$\alpha $ & $\beta $ & $\gamma $ & $z$ & $a$ & $b$ & $c$ & $x$ & $y$ \\ \hline
$2$ & $0$ & $1$ & $6$ & $8$ & $6$ & $1$ & $4069$ & $265$ \\ 
$2$ & $2$ & $2$ & $3$ & $2$ & $8$ & $2$ & $241397$ & $3877$ \\ 
$2$ & $4$ & $1$ & $1$ & $2$ & $4$ & $1$ & $217$ & $37$ \\ 
$2$ & $4$ & $2$ & $1$ & $2$ & $4$ & $2$ & $107$ & $37$ \\ 
$2$ & $4$ & $5$ & $2$ & $8$ & $4$ & $5$ & $335802455$ & $483121$ \\ 
$2$ & $5$ & $2$ & $1$ & $2$ & $5$ & $2$ & $1495$ & $133$ \\ 
$2$ & $5$ & $2$ & $2$ & $8$ & $5$ & $2$ & $34255$ & $1057$ \\ 
$3$ & $2$ & $2$ & $3$ & $3$ & $8$ & $2$ & $912668635$ & $940897$ \\ 
$3$ & $5$ & $1$ & $3$ & $3$ & $11$ & $1$ & $8099$ & $433$ \\ 
$4$ & $2$ & $1$ & $12$ & $16$ & $8$ & $1$ & $73225$ & $2161$ \\ 
$4$ & $4$ & $1$ & $1$ & $4$ & $4$ & $1$ & $37$ & $25$ \\ 
$4$ & $4$ & $1$ & $1$ & $4$ & $4$ & $1$ & $97129$ & $2113$ \\ 
$4$ & $4$ & $1$ & $2$ & $10$ & $4$ & $1$ & $17$ & $97$ \\ 
$4$ & $4$ & $4$ & $1$ & $4$ & $4$ & $4$ & $3419$ & $313$ \\ 
$4$ & $5$ & $2$ & $1$ & $4$ & $5$ & $2$ & $665$ & $97$ \\ 
$5$ & $1$ & $1$ & $3$ & $5$ & $7$ & $1$ & $53333$ & $1417$ \\ \hline
\end{tabular}
\end{center}
\end{table}
\newpage
\textit{In particular, if $n\geq 3$ is a multiple of $3$ and the Diophantine
equation \eqref{eq:(1.2)} has an integer solution $(x,y,a,b,c,n)$, then $n=6$%
. Furthermore, when $n=6$, the only solution $(x,y,a,b,c)$ is $(37,5,4,4,1).$%
}

\begin{proof}
Equation \eqref{eq:(1.2)} can be rewritten as 
\begin{equation}
\left( \frac{x}{z^{3}}\right) ^{2}+A=\left( \frac{y}{z^{2}}\right) ^{3},
\label{eq:2.1}
\end{equation}%
where $A$ is cubefree and defined implicitly by the relation $2^{a}\cdot
3^{b}\cdot 11^{c}=Az^{6}.$ One can see that $A=2^{\alpha }\cdot 3^{\beta
}\cdot 11^{\gamma }$ with some exponents $\alpha ,~\beta ,~\gamma \in
\left\{ 0,1,2,3,4,5\right\} .$ We thus get 
\begin{equation}
V^{2}=U^{3}-2^{\alpha }\cdot 3^{\beta }\cdot 11^{\gamma },  \label{eq:2.2}
\end{equation}%
where $U=y/z^{2}$, $V=x/z^{3}$, $\alpha ,~\beta ,~\gamma \in \left\{
0,1,2,3,4,5\right\} $ and all prime factors of $z$ are in $\{2,3,11\}$.
Thus, we need to determine all the $\left\{ 2,3,11\right\} $-integral points
on the totality of the 216 elliptic curves above. Recall that if ${\mathcal{S%
}}$ is a finite set of prime numbers, then an ${\mathcal{S}}$-integer is
rational number $a/b$ where $a$ and $b$ are coprime integers and $b$ is an ${%
\mathcal{S}}$-unit. We use MAGMA \cite{Bosma} to determine all the $\left\{
2,3,11\right\} $-integral points on the above elliptic curves from which one
can reconstruct easily the solutions $(x,y,a,b,c)$ listed in Tables 4 and
Table 5.

When $n=6$, we replace $n$ by $3$ and $y$ by $y^{2}$ and get a solution of
the same equation \eqref{eq:(1.2)} with $n=3$ and the value of $y$ being a perfect square. 
Looking in Tables $4$ and $5$ we get only the possibility $%
(37,25,4,4,1)$ for $(x,y,a,b,c)$.
Therefore, the only solution to equation %
\eqref{eq:(1.2)} having $n=6$ is $(37,5,4,4,1)$. 
This completes the proof of this lemma.
\end{proof}

\section{ The case $n=4$}

\begin{lemma}
The only solutions with $n=4$ and $bc>0$ of the Diophantine equation %
\eqref{eq:(1.2)} are given Table $6$.
\end{lemma}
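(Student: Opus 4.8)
The plan is to reduce the quartic case to a finite computation on genus-one curves, exactly paralleling the cubic argument from Section 2. Since we are in the case $n=4$ with $bc>0$, I would begin by writing $2^{a}\cdot 3^{b}\cdot 11^{c}=Bw^{4}$, where $B$ is fourth-power-free, so that $B=2^{\alpha}\cdot 3^{\beta}\cdot 11^{\gamma}$ with $\alpha,\beta,\gamma\in\{0,1,2,3\}$ and all prime factors of $w$ lie in $\{2,3,11\}$. Dividing \eqref{eq:(1.2)} through by $w^{4}$ then transforms the equation into
\begin{equation}
V^{2}=U^{4}-2^{\alpha}\cdot 3^{\beta}\cdot 11^{\gamma},\qquad U=\frac{y}{w},\quad V=\frac{x}{w^{2}},
\label{eq:plan-quartic}
\end{equation}
so that each admissible triple $(\alpha,\beta,\gamma)$ gives a quartic model of a genus-one curve, and every solution of \eqref{eq:(1.2)} with $n=4$ produces a $\{2,3,11\}$-integral point on one of these curves.

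Next I would enumerate the resulting models. There are $4^{3}=64$ choices for $(\alpha,\beta,\gamma)$, each defining a curve of the form \eqref{eq:plan-quartic}; these are the quartic analogues of the $216$ cubic curves \eqref{eq:2.2} treated in Lemma 2.1. For each such curve I would invoke MAGMA to compute all $\{2,3,11\}$-integral points, using the same machinery (Cremona's \texttt{mwrank} to determine the Mordell--Weil rank of the associated Jacobian, followed by the built-in routines for integral and $S$-integral points on the quartic model). From each $\{2,3,11\}$-integral point $(U,V)$ one reconstructs $(x,y,a,b,c)$ by clearing the denominator $w$ and recording the exponents; the surviving tuples with $bc>0$ are precisely the entries to be collected into Table $6$.

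Finally I would confirm that the list is complete and self-consistent. The key verification is that each reconstructed tuple actually satisfies \eqref{eq:(1.2)} with coprime $x$ and $y$, and that the earlier modular observation (namely that $bc>0$ forces either $a>0$ or $x$ even) is respected; this also serves as a sanity check on the computer output. I would then note, as in Lemma 2.1, the corollary that any admissible exponent $n$ divisible by $4$ reduces to the case $n=4$ by replacing $y$ with $y^{n/4}$, so that the solutions found here together with those of Lemma 2.1 account for all $n$ whose prime factors lie in $\{2,3\}$.

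The main obstacle I anticipate is not conceptual but computational: for several of the $64$ quartics the Jacobian may have positive rank, and determining generators of the Mordell--Weil group together with a provably exhaustive search for $S$-integral points can be delicate. In contrast to the cubic case, where MAGMA's $S$-integral point routines on Weierstrass models are very robust, the quartic models may require first passing to an isomorphic Weierstrass form (or using a $2$-descent) before the $S$-integral point computation terminates with a rigorous bound. Ensuring that the height bounds are effective and that no point is missed — rather than merely producing a list of points that happen to be found — is the step demanding the most care.
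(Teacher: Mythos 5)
Your proposal matches the paper's proof essentially verbatim: the paper likewise writes $2^{a}3^{b}11^{c}=Az^{4}$ with $A=2^{\alpha}3^{\beta}11^{\gamma}$ fourth-power-free, $\alpha,\beta,\gamma\in\{0,1,2,3\}$, reduces to finding the $\{2,3,11\}$-integral points on the resulting $64$ quartic genus-one models (written there as $U^{2}+A=V^{4}$ with $U=x/z^{2}$, $V=y/z$, i.e.\ your equation with the variable names swapped), and carries out the computation in MAGMA to assemble Table~6. Your added remarks on verifying completeness of the $S$-integral point search are sensible but do not change the argument.
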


\begin{table}[ht]
\caption{Solutions for $n=4$.}
\begin{center}
\begin{tabular}{|l|l|l|l|l|l|l|l|l|}
\hline
$\alpha $ & $\beta $ & $\gamma $ & $z$ & $a$ & $b$ & $c$ & $x$ & $y$ \\ \hline  
$0$ & $3$ & $1$ & $4$ & $8$ & $3$ & $1$ & $233$ & $19$ \\ 
$0$ & $3$ & $2$ & $4$ & $8$ & $3$ & $2$ & $1607$ & $43$ \\ 
$1$ & $0$ & $1$ & $6$ & $5$ & $4$ & $1$ & $7$ & $13$ \\ 
$1$ & $1$ & $1$ & $4$ & $9$ & $1$ & $1$ & $4223$ & $65$ \\ 
$1$ & $1$ & $2$ & $6$ & $5$ & $5$ & $2$ & $235223$ & $485$ \\ 
$2$ & $1$ & $1$ & $2$ & $6$ & $1$ & $1$ & $17$ & $7$ \\ 
$2$ & $1$ & $1$ & $2$ & $6$ & $1$ & $1$ & $527$ & $23$ \\ 
$2$ & $1$ & $1$ & $4$ & $10$ & $1$ & $1$ & $223$ & $17$ \\ 
$2$ & $1$ & $2$ & $2$ & $6$ & $1$ & $2$ & $73$ & $13$ \\ 
$3$ & $1$ & $1$ & $1$ & $3$ & $1$ & $1$ & $19$ & $5$ \\ 
$3$ & $2$ & $1$ & $2$ & $7$ & $2$ & $1$ & $343$ & $19$ \\ 
$3$ & $3$ & $1$ & $1$ & $3$ & $3$ & $1$ & $5$ & $7$ \\ 
$3$ & $3$ & $1$ & $3$ & $3$ & $7$ & $1$ & $2165$ & $47$ \\ 
\hline
\end{tabular}
\end{center}
\end{table}

\begin{proof}
We use a similar method as in the case $n=3$ except that we write equation %
\eqref{eq:(1.2)} as 
\begin{equation}
U^{2}+A=V^{4},  \label{3.1}
\end{equation}%
where now $U=x/z^{2},~V=y/z$, and $A$ is fourth powerfree and defined
implicitly by the relation $2^{a}\cdot 3^{b}\cdot 11^{c}=Az^{4}.$ Thus, $%
A=2^{\alpha }\cdot 3^{\beta }\cdot 11^{\gamma }$ holds with some exponents $%
\alpha ,~\beta ,\gamma \in \left\{ 0,1,2,3\right\} .$ Observe that all prime
factors of $z$ are in the set $\{2,3,11\}$. Hence, we reduced the problem to
determining all the $\{2,3,11\}$-integral points the totality of the 64
elliptic curves above. We used again MAGMA to determine these points from
which we easily determined all the corresponding solutions $(x,y,a,b,c)$
listed in Table $6$.
\end{proof}

\section{The case when $n\geq 5$ is prime}

\begin{lemma}
The Diophantine equation \eqref{eq:(1.2)} has no solutions with $n\ge 5$
prime and $bc>0$.
\end{lemma}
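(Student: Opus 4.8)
The plan is to reduce equation \eqref{eq:(1.2)} with $n=p\geq 5$ prime to the statement that a certain Lucas sequence has a term without a primitive divisor, and then invoke the Bilu--Hanrot--Voutier classification to obtain a contradiction. We work in the ring of Gaussian integers $\ZZ[\ii]$, which is a PID. Recall from the remarks that we may assume $bc>0$, and that either $a>0$ or $x$ is even. The first task is to arrange that $x$ is odd and that $a$ is even, so that the square factor $2^a3^b11^c$ can be absorbed cleanly. The observation modulo $8$ gives that $a>0$ or $x$ even; I would treat the even-$x$ case by a descent or a separate congruence argument, and in the main case assume $x$ is odd. Factoring, we get
\begin{equation}
x^2+2^a3^b11^c=(x+\ii\,2^{a/2}3^{\lfloor b/2\rfloor}\cdots)(x-\cdots)=y^p,\label{plan:fact}
\end{equation}
so I first want $2^a3^b11^c$ to be (up to small adjustments) a perfect square times a squarefree core; splitting according to the parities of $a,b,c$ produces finitely many shapes to analyze.

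Next I would show the two Gaussian factors $x\pm\ii\sqrt{2^a3^b11^c}$ are coprime in $\ZZ[\ii]$ up to a controlled common factor supported on $\{1+\ii\}$ and on the primes above $3$ and $11$. Since $\gcd(x,y)=1$ and $p$ is odd, unique factorization forces each factor to be (a unit times) a $p$-th power of a Gaussian integer, with the ramified/split contributions of $2,3,11$ accounted for explicitly. Writing $x+\ii\sqrt{\cdots}=\lambda\,\delta^p$ for an appropriate $\lambda$ collecting the finitely many exceptional prime powers and a unit, and subtracting the conjugate relation, I obtain an identity of the form
\[
\frac{\delta^p-\bar\delta^{\,p}}{\delta-\bar\delta}=\pm 2^{a'}3^{b'}11^{c'},
\]
which exhibits the left-hand side as a \emph{Lucas number} $L_p$ in the Lucas sequence with roots $\delta,\bar\delta$. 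This step is essentially bookkeeping: one must check that $\delta,\bar\delta$ are genuine Lucas parameters (coprime, real ratio not a root of unity, nonzero integer sum and product) for each parity shape arising from \eqref{plan:fact}.

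The decisive step is the Primitive Divisor Theorem: for $p\geq 5$ every term $L_p$ of a Lucas sequence has a primitive divisor, i.e.\ a prime $q$ dividing $L_p$ but dividing neither the discriminant nor any earlier term, and such $q$ satisfies $q\equiv\pm1\pmod p$. But our identity shows $L_p$ is an $\{2,3,11\}$-unit, so its only possible prime divisors are $2,3,11$; a primitive divisor $q\in\{2,3,11\}$ would then have to satisfy $q\equiv\pm1\pmod p$ with $p\geq5$, which is impossible for $p\geq 13$ and leaves only $p\in\{5,7,11\}$ together with the finitely many sporadic Lucas sequences in the Bilu--Hanrot--Voutier tables to rule out by direct inspection. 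This is the main obstacle: one must carefully enumerate the finitely many parameter shapes (parities of $a,b,c$ and the unit/ramification factors $\lambda$), verify the Lucas conditions in each, and then cross-check the small primes $p=5,7,11$ and the BHV exceptional table against the constraint that $L_p$ be supported only on $\{2,3,11\}$. The expectation is that all these finitely many cases collapse, so that no solution with $p\geq 5$ survives, and combining this with Lemmas on $n=3$ and $n=4$ completes the proof of the Theorem.
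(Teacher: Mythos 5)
Your overall strategy --- factor over an imaginary quadratic field, extract a Lucas sequence, and apply the Bilu--Hanrot--Voutier primitive divisor theorem --- is the same as the paper's, but two of your concrete steps do not survive scrutiny. First, you cannot work in $\ZZ[\ii]$. After splitting off the square part of $2^{a}3^{b}11^{c}$ you are left with $x^{2}+dz^{2}=y^{p}$, where $d$ runs over the squarefree values $1,2,3,6,11,22,33,66$ according to the parities of $a,b,c$, and the factorization $(x+\ii z\sqrt{d})(x-\ii z\sqrt{d})$ lives in $\mathbb{K}=\QQ(\ii\sqrt{d})$, which equals $\QQ(\ii)$ only when all three exponents are even. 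These fields are not PIDs in general (their class numbers are $1,2,4$ or $8$); what saves the argument is not unique factorization but the fact that the class number and the order of the unit group are coprime to $p$, so that $x+\ii z\sqrt{d}$ is still associated to a $p$-th power in $\mathcal{O}_{\mathbb{K}}$. One must also allow half-integral $u,v$ in $\eta=u+\ii\sqrt{d}v$ when $d\equiv 3\pmod 4$ (i.e.\ $d=3,11$). Your plan to ``arrange that $a$ is even'' has no mechanism behind it, and the mod $8$ observation is needed only to force $y$ odd (hence the two factors coprime), not to run a descent on even $x$.

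Second, and more seriously, the endgame is not a table lookup. A primitive divisor $q$ of $L_{p}$ must lie in $\{2,3,11\}$ and satisfy $q\equiv\pm1\pmod p$; for $p\geq 5$ prime this forces $q=11$ and $p=5$ (not $p\in\{5,7,11\}$, since $11\mp1$ has no prime factor $\geq 7$), and the accompanying condition $\left(\frac{-d}{11}\right)=1$ then leaves only $d\in\{2,6\}$. These surviving cases do not ``collapse by direct inspection'': the identity $L_{5}=\pm 2^{\alpha_{1}}3^{\beta_{1}}11^{\gamma_{1}}/v$ becomes a genuine quartic equation $5u^{4}-10du^{2}v^{2}+d^{2}v^{4}=\pm 3^{\beta_{1}}11^{\gamma_{1}}$ (and variants depending on which prime powers divide $v$), which the paper resolves by computing $\{2\}$- and $\{2,3\}$-integral points on the associated elliptic curves with MAGMA. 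Indeed nontrivial solutions do occur at $p=5$ (e.g.\ $(x,y)=(241,9)$), and they are discarded only because they have $b=0$ or violate $\gcd(x,y)=1$, not because the Lucas machinery excludes them. Without this computational resolution of the $p=5$, $d\in\{2,6\}$ cases, your argument is incomplete.
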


\begin{proof}
We change $n$ to $p$ to emphasize that $p~$is a prime number. We rewrite the
Diophantine equation \eqref{eq:(1.2)} as $x^2+dz^2=y^p,$ where 
\begin{equation}  \label{d}
d\in \{1,~2,~3,~6,~11,~22,~33,~66\}
\end{equation}
according to the parities of the exponents $a,~b$ and $c.$ Here, $%
z=2^{\alpha _{1}}\cdot 3^{\beta _{1}}\cdot 11^{\gamma _{1}}$ for some
nonnegative exponents $\alpha _{1}$, $\beta _{1}$ and$~\gamma _{1}$. Write ${%
\mathbb{K}}:=\mathbb{Q}(i\sqrt{d}).$ Observe that since $bc>0$ and either $%
a>0$ or $x$ is even (see the end of Section 1), it follows that $y$ is
always odd. A standard argument tells us now that in ${\mathbb{K}}$ we have 
\begin{equation}
(x+i\sqrt{d}z)(x-i\sqrt{d}z)=y^{n},
\end{equation}
where the ideals generated by $x+iz\sqrt{d}$ and $x-iz\sqrt{d}$ are coprime
in ${\mathbb{K}}$. Hence, the ideal $x+iz\sqrt{d}$ is a $p$th power of some
ideal ${\mathcal{O}}_{{\mathbb{K}}}$. The class number of ${\mathbb{K}}$
belongs to $\{1,2,4,8\}$. In particular, it is coprime to $p$. Thus, by a
standart argument, it follows that $x+iz\sqrt{d}$ is associated to a $p^{th}$
power in ${\mathcal{O}}_{{\mathbb{K}}}.$ The cardinality of the group of
units of ${\mathcal{O}}_{{\mathbb{K}}}$ is $2,~4$, or $6$, all coprime to $p$%
. Furthermore, $\{1,i\sqrt{d}\}$ is always an integral base for ${\mathcal{O}%
}_{{\mathbb{K}}}$ except for when $d=3$, and $d=11$, in which cases an
integral basis for ${\mathcal{O}}_{{\mathbb{K}}}$ is $\{1,(1+i \sqrt{d})/{2}%
\}$. Thus, we may assume that the relation 
\begin{equation}  \label{4.2}
x+i\sqrt{d}z=\eta ^{p}
\end{equation}%
holds with some algebraic integer $\eta \in {\mathcal{O}}_{{\mathbb{K}}}.$
We write $\eta =u+i\sqrt{d}v$, where either both $u$ and $v$ are integers,
or both $2u$ and $2v$ are odd integers, the last case occurring only when $%
d=3$ or $d=11.$ Conjugating equation \eqref{4.2} and subtracting the two
relations, we get 
\begin{equation}  \label{4.3}
2i\sqrt{d}\cdot 2^{\alpha _{1}}\cdot 3^{\beta _{1}}\cdot 11^{\gamma
_{1}}=\eta ^{p}-{\overline{\eta }}^{p}.
\end{equation}
The right hand side of the above equation is an integer multiple of $2i\sqrt{%
d}v=\eta-{\overline{\eta}}.$ We deduce that $v\mid 2^{\alpha _{1}}\cdot
3^{\beta _{1}}\cdot 11^{\gamma _{1}}$, and that 
\begin{equation}  \label{4.4}
\frac{2^{\alpha _{1}}\cdot 3^{\beta _{1}}\cdot 11^{\gamma _{1}}}{v}=\frac{
\eta ^{p}-{\overline{\eta} }^{p}}{\eta -{\overline{\eta }}}\in \mathbb{Z}.
\end{equation}
Now let $\{L_{m}\}_{m\geq 0}$ be the sequence of general term $L_{m}=(\eta
^{m}- {\overline{\eta }}^{m})/(\eta -{\overline{\eta }})$ for all $m\geq 0.$
This is a \textit{Lucas sequence} and it consists of integers. Its
discriminant is $(\eta -{\overline{\eta} })^{2}=-4dv^2.$ For nonzero integer 
$k$, let $P(k)$ be the largest prime factor of $k$ with the convention that $%
P(\pm 1)=1$. Equation \eqref{4.4} now leads to the conclusion that 
\begin{equation}  \label{4.5}
P(L_{p})=P\left(\frac{2^{\alpha _{1}}\cdot 3^{\beta _{1}}\cdot 11^{\gamma
_{1}}}{v }\right)\leq 11.
\end{equation}
Recall that a prime factor $q$ of $L_{m}$ is called \textit{primitive} if $%
q\nmid L_{k}$ holds for any $0<k<m$ and also $q\nmid (\eta -{\overline{\eta }%
})^{2}.$ When $q$ exists, it satisfies the congruence $q\equiv \pm 1\pmod m$
where the sign coincides with $(\frac{-4dv^2}{q})=(\frac{-d}{q}).$ Here, and
in what follows, $(\frac{a}{q})$ stands for the Legendre symbol of the
integer $a$ with respect to the odd prime $q$. Recall that a particular
instance of the Primitive Divisor Theorem for the Lucas sequences implies
that if $p\geq 5$, then $L_{p}$ always has a primitive prime factor except
for finitely many pairs $(\eta,{\overline{ \eta} })$ all of which appear in
Table 1 in \cite{Bilu} (see also \cite{1}). These exceptional Lucas numbers
are called \textit{defective}.

Let us first assume that we are dealing with a number $L_{p}$ without
primitive divisors. Then a quick look at Table 1 in \cite{Bilu} reveals that
the only defective Lucas numbers whose roots are in ${\mathbb{K}}={\mathbb{Q}%
}(i\sqrt{d})$ with $d$ appearing in the list \eqref{d} is $(\eta,~{\overline{%
\eta}})=((1+i\sqrt{11})/2,~(1-i\sqrt{11})/2)$ for which $L_5=1$ and $y=3$.
However, this is not convenient since for us $b>0$ and $x$ and $y$ are
coprime so $y$ cannot be a multiple of $3$.

Now let us look at the possibility when the Lucas number $L_{p}$ appearing
in the right hand side of equation \eqref{4.4} has a primitive divisor.
Since $p\geq 5$, it follows that $11$ is primitive for $L_{p}$. Thus, $%
11\equiv \pm 1\pmod p$. We now see that the only possibility is $p=5$ and
since $11\equiv 1\pmod 5$, we get that $(\frac{-d}{11})=1.$ Since $d\in
\{1,~2,~3,~6,~11,~22,~33,~66\}$, we get that $d=2$ and $d=6$. In particular, 
$u$ and $v$ integers.

In the remaining of this section, we shall treat each one of these two cases
separately.
\end{proof}

\subsection{The case $d=2$}

Since $P(L_{n})=11$ is coprime to $-4dv^{2}=-8v^2,$ we get the possibilities 
\begin{equation*}
v=\pm 2^{\alpha _{1}},\quad v=\pm 3^{\beta _{1}},\quad v=\pm 2^{\alpha
_{1}}3^{\beta _{1}}.
\end{equation*}
Since $y=u^{2}+2v^{2},$ we get that $u$ is odd.

\textbf{Case 1:} $v=\pm 2^{\alpha _{1}}.$

In this case, equation \eqref{4.4} becomes 
\begin{equation*}
\pm 3^{\beta _{1}}11^{\gamma _{1}}=5u^{4}-20u^{2}v^{2}+4v^{4}.
\end{equation*}
Since $u$ is odd, it follows that the right hand side of the last equation
above is congruent to $5\pmod 8$. So $\pm 3^{\beta _{1}}11^{\gamma
_{1}}\equiv 5\pmod 8$, showing that the sign on the left hand side is
negative and $\beta _{1}+\gamma _{1}$ is odd.

Assume first that $\beta _{1}=2\beta _{0}+1$ be odd. Then $\gamma
_{1}=2\gamma _{0}.$ We get 
\begin{equation*}
-3V^{2}=5U^{4}-20U^{2}+4,
\end{equation*}
where $(U,V):=({u}/{v},{3^{\beta _{0}}11^{\gamma _{0}}}/{v^{2}})$ is a $%
\{2\} $-integral point on the above elliptic curve. Using MAGMA we get no
solution.

Assume now that $\beta _{1}=2\beta_0$ is even. Then $\gamma _{1}=2\gamma_0+1$
is odd and we get that 
\begin{equation*}
-11V^{2}=5U^{4}-20U^{2}+4,
\end{equation*}
where $(U,V):=({u}/{v},{3^{\beta _{0}}11^{\gamma _{0}}}/{v^{2}})$ is a $%
\{2\} $-integral point on the above elliptic curve. With MAGMA, we get that
the only such points on the above curve are $(U,V)=(\pm 1,\pm 1)$ and $(\pm
1/2,\pm 1/4)$, leading to $(u,v)=(\pm 1,\pm 1)$ and $(\pm 1,\pm 2)$,
respectively. These give the solution $(1,3,1,0,2)$ and $(241,9,3,0,2)$
respectively. The second solution is also a solution for $n=10$ for which $%
y=3$. However, as $b=0$, we ignore such solutions.

\textbf{Case 2:} $v=\pm 3^{\beta _{1}}$.

In this case, the equation \eqref{4.4} becomes 
\begin{equation*}
\pm 2^{\alpha _{1}}11^{\gamma _{1}}=5u^{4}-20u^{2}v^{2}+4v^{4}.
\end{equation*}
Since $u$ is odd, the right hand side is congruent to $5\pmod 8.$ So we get the congruence $%
\pm 2^{\alpha _{1}}11^{\gamma _{1}}\equiv 5\pmod 8$. When $\alpha _{1}>0$,
there is no solution, while when $\alpha_{1}=0$, we have $-11^{\gamma
_{1}}\equiv 5\pmod 8$. Thus, $\gamma_{1}$ is odd and we get that 
\begin{equation*}
-11V^{2}=5U^{4}-20U^{2}+4,
\end{equation*}
where $(U,V):=({u}/{v},{11^{\gamma _{0}}}/{v^{2}})$ is a $\{2,3\}$-integral
point on the above elliptic curve. With MAGMA, we get a few points which
lead to two solutions of the original equation \eqref{eq:(1.2)} which are
not convenient for us since they have $b=0$.

\textbf{Case 3:} $v=\pm 2^{\alpha _{1}}3^{\beta _{1}}$.

In this case, the equation \eqref{4.4} becomes 
\begin{equation*}
\pm 11^{\gamma _{1}}=5u^{4}-20u^{2}v^{2}+4v^{4}.
\end{equation*}
Similar to the above cases, there is no solution with $b>0$.

\subsection{The case $d=6$}

In this case, for the equation \eqref{4.4} we get the possibilities 
\begin{equation*}
v=\pm 2^{\alpha _{1}},\quad v=\pm 3^{\beta _{1}},\quad v=\pm 2^{\alpha
_{1}}3^{\beta _{1}}.
\end{equation*}
Since $y=u^{2}+6v^{2}$, we get that $u$ is odd.

\textbf{Case 1:} $v=\pm 2^{\alpha _{1}}.$

Here, the equation \eqref{4.4} becomes 
\begin{equation*}
\pm 3^{\beta _{1}}11^{\gamma _{1}}=5u^{4}-60u^{2}v^{2}+36v^{4}.
\end{equation*}
Since $u$ and $v$ are coprime, we get 
\begin{equation*}
\pm 3^{\beta _{1}}11^{\gamma _{1}}\equiv 5\pmod 8.
\end{equation*}
So $\pm 3^{\beta _{1}}11^{\gamma _{1}}\equiv 5\pmod 8$, showing that the
sign on the left hand side is negative and that $\beta _{1}+\gamma _{1}$ is
odd.

First let $\beta _{1}=2\beta _{0}+1$ and $\gamma _{1}=2\gamma _{0}$. Then we
get that 
\begin{equation*}
-3V^{2}=5U^{4}-60U^{2}+36,
\end{equation*}
where $(U,V):=({u}/{v},{3^{\beta _{0}}11^{\gamma _{0}}}/{v^{2}})$ is a $%
\{2\} $-integral point on the above elliptic curve. Using MAGMA, we get no
solution.

Assume next that $\beta _{1}=2\beta_0$ be even. Then $\gamma
_{1}=2\gamma_0+1 $, and we have 
\begin{equation*}
-11V^{2}=5U^{4}-60U^{2}+36,
\end{equation*}
where $(U,V):=({u}/{v},{3^{\beta _{0}}11^{\gamma _{0}}}/{v^{2}})$ is a $%
\{2\} $-integral point on the above elliptic curve. With MAGMA, we obtain
that the only solutions as $(U,V)=(\pm 3,\pm 3)$ and $(\pm 9/4,\pm 57/16),$
leading to $(u,v)=(\pm 3,\pm 1)$. This leads to the solution $(837,15,1,5,2)$
of the initial equation \eqref{eq:(1.2)}, but as $\gcd (837,15)=3\neq 1$,
this is not a convenient solution.

\textbf{Case 2:} $v=\pm 3^{\beta _{1}}$.

In this case, the equation \eqref{4.4} becomes 
\begin{equation*}
\pm 2^{\alpha _{1}}11^{\gamma _{1}}=5u^{4}-60u^{2}v^{2}+36v^{4}.
\end{equation*}
This leads to 
\begin{equation*}
\pm 2^{\alpha _{1}}11^{\gamma _{1}}\equiv 5\pmod 8,
\end{equation*}
and when $\alpha _{1}>0$, we get no solution, while when $\alpha _{1}=0,$ we
get that 
\begin{equation*}
-11^{\gamma _{1}}\equiv 5\pmod 8.
\end{equation*}
Therefore $\gamma _{1}$ is odd, and we have 
\begin{equation*}
-11V^{2}=5U^{4}-60U^{2}+36,
\end{equation*}
where $(U,V):=({u}/{v},{2^{\alpha _{0}}11^{\gamma _{0}}}/{v^{2}})$ is a $%
\{3\}$-integral point on the above elliptic curve. With MAGMA we obtain the
only solutions as $(U,V)=(\pm 3,\pm 3)$ and $(\pm 9/4,\pm 57/16)$. They do
not lead to solutions of our original equation.

\textbf{Case 3: }$v=\pm 2^{\alpha _{1}}3^{\beta _{1}}$.

In this case, the equation \eqref{4.4} becomes 
\begin{equation*}
\pm 11^{\gamma _{1}}=5u^{4}-60u^{2}v^{2}+36v^{4}.
\end{equation*}
We now get that $\pm 11^{\gamma _{1}}\equiv 5\pmod 8$. This last congruence
is possible only when the left hand side has minus sign. Therefore $\gamma
_{1}$ must be odd. We are led to computing the $\{2,3\}-$integer points on a certain
elliptic curve and similarly as in the previous cases, we get no solution.

\end{document}